\newcommand{\TITLE}{Non-monogenic Division Fields of Elliptic Curves}
\newcommand{\TITLERUNNING}{}
\theoremstyle{plain}
\newtheorem{theorem}{Theorem}
\newtheorem{lemma}[theorem]{Lemma}
\newtheorem{corollary}[theorem]{Corollary}
\theoremstyle{definition}
\theoremstyle{remark}
\newtheorem{remark}[theorem]{Remark}
\newtheorem{example}[theorem]{Example}
\newtheorem{question}[theorem]{Question}
\numberwithin{theorem}{section}
\newcommand{\tightoverset}[2]{%
  \mathop{#2}\limits^{\vbox to -.5ex{\kern-1.05ex\hbox{$#1$}\vss}}}
\newcommand{\gp}{{\mathfrak{p}}}
\def\Ocal{{\mathcal O}}
\newcommand{\FF}{\mathbb{F}}
\newcommand{\GG}{\mathbb{G}}
\newcommand{\QQ}{\mathbb{Q}}
\newcommand{\ZZ}{\mathbb{Z}}
\newcommand{\tensor}{\otimes}
\newcommand{\ol}[1]{\overline{#1}}
\newcommand{\Aut}{\operatorname{Aut}}
\newcommand{\GL}{\operatorname{GL}}
\newcommand{\Gal}{\operatorname{Gal}}
\newcommand{\End}{\operatorname{End}}
\newcommand{\irred}{\operatorname{irred}}
\newcommand{\ord}{\operatorname{ord}}
\newcommand{\tors}{\operatorname{tors}}
\newcommand*{\Scale}[2][4]{\scalebox{#1}{#2}}
\title[\TITLERUNNING]{\vspace*{-1.3cm} \TITLE}
\date{\today}
\author[Hanson Smith]{Hanson Smith}
\address{Department of Mathematics, University of Connecticut,
341 Mansfield Road U1009
Storrs, CT 06269-1009
USA}
\email{hanson.smith@uconn.edu or hansonsmith101@gmail.com}
\keywords{Division Field, Monogenic, Power Integral Basis, Torsion}
\subjclass[2010]{11G05, 11R04}
\thanks{The author would like to thank \'Alvaro Lozano-Robledo for the question that inspired this project and the idea for finding explicit families. This research was partially supported by NSF-CAREER CNS-1652238 under the supervision of PI Dr. Katherine E. Stange.}
\begin{document}
\sloppy

\begin{abstract}
For various positive integers $n$, we show the existence of infinite families of elliptic curves over $\mathbb{Q}$ with $n$-division fields, $\mathbb{Q}(E[n])$, that are {\bf not} monogenic, i.e., the ring of integers does not admit a power integral basis. We parametrize some of these families explicitly. Moreover, we show that every $E/\mathbb{Q}$ without CM has infinitely many non-monogenic division fields. Our main technique combines a global description of the Frobenius obtained by Duke and T\'oth with a simple algorithm based on ideas of Dedekind.
\end{abstract}

\maketitle

\section{Introduction}

Denote a primitive $n^{\text{th}}$ root of unity by $\zeta_n$ and take the cyclotomic field $\QQ(\zeta_n)$. The arithmetic of these fields has been the inspiration for over a century of algebraic number theory. For example, Kummer first developed the notion of `ideal numbers' in cyclotomic fields; see \cite{KummerIdeal1} and \cite[pages 1-47]{KummerIdeal2}. Implicit in these investigations is the fact that the ring of integers of $\QQ(\zeta_n)$ is $\ZZ\left[\zeta_n\right]$. To put it another way, the $\GG_m$ division fields are monogenic. For a nice survey of the literature related to the above discussion see \cite[Chapter 4.4]{Nark}. 

Naturally, we want to know about the arithmetic of division fields of other algebraic groups. In this paper we will study the monogeneity of division fields\footnote{We will use `division field' and `torsion field' interchangeably to indicate an extension of the form $\QQ\left(E[n]\right)$.} of elliptic curves. %Remarkably, to the author's knowledge, the literature is sparse on the subject of the monogeneity of division fields.
In \cite{abdiv}, Lozano-Robledo and Gonz\'alez-Jim\'enez classify the possible abelian division fields. In the process, they show that for $n=1,2,3,4$, and 5 one can have $\QQ\left(E[n]\right)=\QQ\left(\zeta_n\right)$. Hence, in these cases, $\QQ\left(E[n]\right)$ is monogenic. The monogeneity of a family of partial 3-torsion fields is studied in \cite{GassertSmithStange}. Adelmann \cite{Adelmann} has an in-depth investigation of the splitting of primes in torsion fields. Cassou-Nogu\`es and Taylor have studied the monogeneity of torsion fields of CM elliptic curves extensively; see \cite{C-NT}, \cite{C-NTModUnits}, and \cite{C-NTMono}.

The outline of the paper is as follows. In Section \ref{EndFrob}, we establish which imaginary quadratic orders we need to consider, and we state a theorem of Duke and T\'oth \cite{DukeToth} that describes the action of any lift of the Frobenius at $p$ on prime-to-$p$-torsion points of an elliptic curve. This theorem will be essential for our current work; however, in a forthcoming investigation into similar questions regarding abelian varieties of dimension greater than one, we will generalize Centeleghe's description of the Frobenius in \cite{Centeleghe}. In Section \ref{DedCrit}, we review Dedekind's criterion for non-monogeneity. With Section \ref{NonMonoTors}, we establish criteria for certain torsion fields of elliptic curves to be non-monogenic and remark that a positive proportion of elliptic curves fall into these families. We then use these ideas to show that every elliptic curve $E/\QQ$ without CM has infinitely many torsion fields that are not monogenic. Section \ref{Families} establishes explicit families in terms of either one or two parameters. Lastly, Section \ref{Tables} includes tables for Theorem \ref{maintables} for $p=3,5,7,$ and 11; the table for $p=2$ is included in Section \ref{NonMonoTors}.

\section{Describing Endomorphism Rings and the Frobenius}\label{EndFrob}
Let $p$ be a rational prime, $E$ an elliptic curve over $\FF_p$, and $a_p$ the trace of Frobenius of $E$. From Hasse we have the bound $|a_p|\leq 2\sqrt{p}.$ 
%Given an integer that obeys the Hasse bound, there exist elliptic curves over $\FF_p$ with this trace of Frobenius. 
Denote the Frobenius endomorphism of $E$ by $\pi$. %$\pi_p$ or $\pi$ when $p$ is clear. 
Recall, $\pi$ satisfies $f_{\pi}=x^2-a_px+p.$
The discriminant of the Weil polynomial $f_{\pi}$ is $\Delta_{\pi}:=a_p^2-4p,$
so we obtain $4p=a_p^2-\Delta_{\pi}.$ 

We would like to know the possibilities for the endomorphism ring of $E$ over $\FF_p$, which we denote $\End_{\FF_p}(E)$. If $E$ is ordinary, i.e., $a_p\not\equiv 0\bmod p$, then \cite[Theorem 4.2]{Waterhouse} shows $\End_{\FF_p}(E)$ can be any order in the imaginary quadratic field $\QQ(\pi)$ containing $\ZZ[\pi]$.

If $E$ is supersingular, $\End_{\ol{\FF_p}}(E)$ is an order in a quaternion algebra. However, over $\FF_p$, we have $\End_{\FF_p}(E)\tensor\QQ\cong \QQ(\pi)$; see \cite[Theorem 8]{WaterhouseMilne}. Theorem 4.2 of \cite{Waterhouse} combined with some computation shows that all possible endomorphism rings occur. Specifically, if $p=2$, then $\End_{\FF_p}(E)\cong \ZZ\left[\sqrt{-2}\right]$ when $a_2=0$ and $\End_{\FF_p}(E)\cong\ZZ\left[\sqrt{-1}\right]$ when $a_2=\pm 2$. If $p=3$, we have $\End_{\FF_p}(E)\cong\ZZ\left[\sqrt{-3}\right]$ or $\ZZ\left[\frac{1+\sqrt{-3}}{2}\right]$ %Specifically we can only have the non-maximal order for $a_3=0$.
for all supersingular $a_3$, i.e., $0,\pm 3$. Otherwise, $\End_{\FF_p}(E)\cong\ZZ\left[\sqrt{-p}\right]$ if $p\equiv 1 \bmod 4$ and $\End_{\FF_p}(E)\cong\ZZ\left[\sqrt{-p}\right]$ or $\ZZ\left[\frac{1+\sqrt{-p}}{2}\right]$ if $p\equiv 3\bmod 4$.

We will also make use of an explicit description of the Frobenius from Duke and T\'oth \cite{DukeToth}. For our purposes, we restrict their theorem to the case where the finite field is $\FF_p$. Henceforth, $\End(E):=\End_{\FF_p}(E)$ and $\Delta_{\End}$ will denote the discriminant of $\End(E)$. Define $b_p$ to be the unique positive integer such that $4p=a_p^2-\Delta_{\End} b_p^2.$ 
In other words, $b_p$ is the index of the order generated by the Frobenius in $\End(E)$, so $\Delta_{\pi}=b_p^2\Delta_{\End}.$ 
Consider now the matrix
\begin{equation}\label{Frob}
\sigma_p = \begin{bmatrix} 
\dfrac{a_p+b_p\delta_{\End}}{2} & b_p \\
\dfrac{b_p(\Delta_{\End}-\delta_{\End})}{4} & \dfrac{a_p-b_p\delta_{\End}}{2} 
\end{bmatrix},
\end{equation}
where $\delta_{\End} =0,1$ according to whether $\Delta_{\End}\equiv 0,1$ modulo 4.
%\htodo{Should I use $T(n):=$ the torsion field?}
\begin{theorem}\label{DandT} \cite[Theorem 2.1]{DukeToth} Let $\hat{E}/\QQ$ be an elliptic curve whose reduction at $p$ is $E$. For $n$ prime to $p$, write $\QQ\left(\hat{E}[n]\right)$ for the $n^{\text{th}}$ torsion field. Then $p$ is unramified in $\QQ\left(\hat{E}[n]\right)$ and the integral matrix $\sigma_p$, when reduced modulo $n$, represents the class of the Frobenius at $p$ in $\Gal\left(\QQ\left(\hat{E}[n]\right)/\QQ\right)$.
\end{theorem}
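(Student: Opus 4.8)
The plan is to follow Duke and T\'oth \cite{DukeToth}, specialized to the base field $\FF_p$. Write $\go = \End(E)$, an order of discriminant $\Delta_{\End}$ in the imaginary quadratic field $K = \QQ(\pi)$, and put $w = \tfrac{1}{2}\!\left(\delta_{\End} + \sqrt{\Delta_{\End}}\right)$, so that $\go = \ZZ \oplus \ZZ w$ and $w^2 = \delta_{\End}\, w + \tfrac14(\Delta_{\End} - \delta_{\End})$. Since $4p = a_p^2 - \Delta_{\End} b_p^2$ forces $a_p \equiv b_p\delta_{\End} \pmod 2$, the integer $c := \tfrac12(a_p - b_p\delta_{\End})$ is well defined, and, because $b_p$ is by definition the index $[\go : \ZZ[\pi]]$, we have $\pi = c + b_p w$.

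First I would dispose of the ramification assertion. As $\hat E$ has good reduction at $p$ and $\gcd(n,p)=1$, the reduction map $\hat E[n] \to E[n]$ is an isomorphism of $\Gal(\Qbar/\QQ)$-modules on which every inertia subgroup above $p$ acts trivially; this is the criterion of N\'eron--Ogg--Shafarevich. Since $\QQ(\hat E[n])$ is generated by the coordinates of the points of $\hat E[n]$, triviality of inertia shows $p$ is unramified there, and a Frobenius element $\Frob_p$ acts on $\hat E[n] \cong E[n]$ as the $p$-power Frobenius endomorphism $\pi \in \go$.

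The crux is to pin down the $\go$-module structure of $E[n]$: I claim it is free of rank one over $\go/n\go$, equivalently that $T_\ell E$ is free of rank one over $\go \tensor \ZZ_\ell$ for every prime $\ell \mid n$. Here I would apply Tate's isogeny theorem over finite fields: since $\Frob = \pi$ already lies in $\go$, it yields $\End_{\go \tensor \ZZ_\ell}(T_\ell E) = \go \tensor \ZZ_\ell$. Now $V_\ell E$ is a faithful module over the two-dimensional $\QQ_\ell$-algebra $K \tensor \QQ_\ell$, hence free of rank one over it, so every $\go\tensor\ZZ_\ell$-endomorphism of the lattice $T_\ell E$ is multiplication by a scalar of $K\tensor\QQ_\ell$; thus the multiplier ring of $T_\ell E$ is exactly $\go \tensor \ZZ_\ell$. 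A lattice in a quadratic algebra whose multiplier ring equals a fixed order $\go'$ is an invertible $\go'$-module, and since $\go \tensor \ZZ_\ell$ is local, $T_\ell E$ is therefore free of rank one. I expect this to be the main obstacle: $\go$ need not be the maximal order, and non-maximal quadratic orders do admit non-free lattices (for instance the maximal order itself) --- the point is that such a lattice has a strictly larger multiplier ring, which Tate's theorem rules out, so it is essential that $\go$ is the genuine endomorphism ring $\End_{\FF_p}(E)$ and not merely some order containing $\ZZ[\pi]$.

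Finally, choosing a $\go/n\go$-module generator of $E[n]$ identifies $E[n]$ with $\go/n\go = (\ZZ/n\ZZ)\oplus(\ZZ/n\ZZ)w$, on which $\Frob_p$ acts as multiplication by $\pi = c + b_p w$. Using the relation for $w^2$ one computes $\pi\cdot 1 = c + b_p w$ and $\pi\cdot w = b_p\tfrac14(\Delta_{\End}-\delta_{\End}) + \tfrac12(a_p + b_p\delta_{\End})\,w$, and these two images, read off in the basis $\{1,w\}$, assemble into the transpose of the matrix $\sigma_p$ of \eqref{Frob}. Since every $2\times 2$ matrix is $\GL_2$-conjugate to its transpose and the theorem asserts only that $\sigma_p$ represents the conjugacy class of $\Frob_p$ in $\Gal(\QQ(\hat E[n])/\QQ)$, this finishes the proof; had one let the Galois action be on row vectors, $\sigma_p$ would have appeared directly.
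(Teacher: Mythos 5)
This theorem is cited in the paper from Duke and T\'oth without proof, so there is no in-paper argument to compare against; your reconstruction is faithful to the structure of the original Duke--T\'oth proof (N\'eron--Ogg--Shafarevich for the unramifiedness, Tate's isogeny theorem to pin down $T_\ell E$ as a rank-one free $\End(E)\tensor\ZZ_\ell$-module, then an explicit computation of $\pi = c + b_p w$ in the basis of the order). You also correctly flag the key subtlety: the lemma that $T_\ell E$ is free over $\End(E)\tensor\ZZ_\ell$ is exactly where one must use the genuine endomorphism ring rather than an arbitrary order containing $\ZZ[\pi]$.

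Two small corrections. First, $\End(E)\tensor\ZZ_\ell$ need not be local: when $\ell$ splits in $\QQ(\pi)$ and does not divide the conductor of $\End(E)$, it is $\ZZ_\ell\times\ZZ_\ell$. What you actually need is that it is a finite $\ZZ_\ell$-algebra, hence semilocal, hence has trivial Picard group, so that an invertible module is free; the Gorenstein property of quadratic orders gives you that a lattice with full multiplier ring is invertible. Second, the appeal to ``every $2\times 2$ matrix is $\GL_2$-conjugate to its transpose'' over $\ZZ/n\ZZ$ is heavier than necessary (and not entirely obvious over a non-field); simply ordering the basis as $\{w,1\}$ rather than $\{1,w\}$ produces $\sigma_p$ directly, with no transpose to account for. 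Neither issue affects the correctness of the conclusion.
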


In other words, $\sigma_{p}$ yields the action of the Frobenius on the $l$-adic Tate module $T_l\left( \hat{E}\right)$ for any $l$ that is prime to $p$. We note that the above theorem holds with the appropriate modifications for elliptic curves over arbitrary number fields; however, we restrict to $\QQ$ since it simplifies the exposition and suffices for our investigation.

\section{Dedekind's Criterion for Non-monogeneity}\label{DedekindSection}
We start with Dedekind's criterion, first appearing in \cite{Dedekind}, for the splitting of primes in number fields:

\begin{theorem}\label{DedCrit}
Let $f(x)\in \ZZ[x]$ be monic and irreducible with $\alpha$ a root. Write $K:=\QQ(\alpha)$ with $\Ocal_K$ denoting the ring of integers. If $p\in \ZZ$ is a prime that does not divide $[\Ocal_K:\ZZ[\alpha]]$, then the factorization of $p$ in $\Ocal_K$ coincides with the factorization of $f(x)$ modulo $p$. That is, if
\[f(x)\equiv \phi_1(x)^{e_1}\cdots\phi_k(x)^{e_k}  \bmod p\]
is a factorization of $f(x)$ into irreducibles in $\ZZ/p\ZZ[x]$, then $p$ factors into primes in $\Ocal_K$ as 
\[p=\gp_1^{e_1}\cdots \gp_k^{e_k}.\]
%Moreover, choosing the appropriate bijection, 
Further, the residue degree of $\gp_i$ is equal to the degree of $\phi_i$.
\end{theorem}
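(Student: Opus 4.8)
The plan is to prove the classical Dedekind theorem (sometimes attributed to Dedekind–Kummer) relating the factorization of a rational prime $p$ in $\Ocal_K$ to the factorization of the minimal polynomial $f$ modulo $p$, under the hypothesis $p \nmid [\Ocal_K : \ZZ[\alpha]]$. The key input is that localizing at $p$ kills the index: since $p \nmid [\Ocal_K : \ZZ[\alpha]]$, the inclusion $\ZZ[\alpha] \hookrightarrow \Ocal_K$ induces an isomorphism $\ZZ[\alpha] \tensor_\ZZ \ZZ_{(p)} \xrightarrow{\sim} \Ocal_K \tensor_\ZZ \ZZ_{(p)}$ of semilocal Dedekind domains (the cokernel is a finite abelian group annihilated by the index, which is a unit in $\ZZ_{(p)}$). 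Consequently the primes of $\Ocal_K$ above $p$ are in bijection with the maximal ideals of $\Ocal_K / p\Ocal_K$, and this quotient agrees with $(\ZZ[\alpha])/p(\ZZ[\alpha])$, so it suffices to understand the ring $\ZZ[\alpha]/p\ZZ[\alpha]$ together with the ramification and residue data recorded in $\Ocal_K \tensor \ZZ_{(p)}$.

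First I would set up the ring isomorphism
\[
\ZZ[\alpha]/p\ZZ[\alpha] \;\cong\; \ZZ[x]/\bigl(p, f(x)\bigr) \;\cong\; \FF_p[x]/\bigl(\bar f(x)\bigr),
\]
where $\bar f$ denotes the reduction of $f$ mod $p$; this uses only that $f$ is the minimal polynomial of $\alpha$, so $\ZZ[\alpha] \cong \ZZ[x]/(f)$. Next, using the factorization $\bar f = \bar\phi_1^{e_1}\cdots\bar\phi_k^{e_k}$ into distinct monic irreducibles over $\FF_p$ (lifting each $\phi_i$ to a monic $\tilde\phi_i \in \ZZ[x]$), the Chinese Remainder Theorem gives
\[
\FF_p[x]/(\bar f) \;\cong\; \prod_{i=1}^k \FF_p[x]/(\bar\phi_i^{e_i}).
\]
Each factor is a local Artinian $\FF_p$-algebra with residue field $\FF_p[x]/(\bar\phi_i) \cong \FF_{p^{\deg \phi_i}}$ and with a unique maximal ideal generated by $\bar\phi_i$, whose $e_i$-th power is zero. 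Pulling these back through the identification with $\Ocal_K/p\Ocal_K$, the maximal ideals $\gp_i$ of $\Ocal_K$ above $p$ correspond to the $\bar\phi_i$, so $\gp_i = (p, \tilde\phi_i(\alpha))$, the residue field $\Ocal_K/\gp_i$ has degree $\deg\phi_i$ over $\FF_p$ (giving the residue degree claim), and the number of primes above $p$ is exactly $k$.

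It then remains to extract the ramification exponents $e_i$. Here I would argue in the localized Dedekind domain $R := \Ocal_K \tensor_\ZZ \ZZ_{(p)}$, in which $p R = \prod_i \gP_i^{f_i}$ for the (finitely many) maximal ideals $\gP_i$ corresponding to the $\gp_i$, with some exponents $f_i \ge 1$ to be determined. Reducing mod $pR$, the length of $R/pR$ as a module over itself (equivalently, its $\FF_p$-dimension divided by the appropriate residue degrees) must match both sides: on one hand $\dim_{\FF_p}(R/pR) = \dim_{\FF_p}(\ZZ[\alpha]/p\ZZ[\alpha]) = \deg f = \sum_i e_i \deg\phi_i$; on the other hand, localizing further at each $\gP_i$, the ring $R_{\gP_i}/p R_{\gP_i}$ is $\Ocal_{K,\gP_i}/\gP_i^{f_i}$, which has length $f_i$ over the DVR $R_{\gP_i}$, hence $\FF_p$-dimension $f_i \deg\phi_i$; meanwhile the corresponding CRT factor $\FF_p[x]/(\bar\phi_i^{e_i})$ has $\FF_p$-dimension $e_i \deg\phi_i$. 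Matching the local factors forces $f_i = e_i$ for each $i$, i.e.\ $p\Ocal_K = \prod_i \gp_i^{e_i}$ (the exponents computed in $R$ transfer back to $\Ocal_K$ since localization away from $p$ does not affect primes above $p$).

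I expect the main obstacle to be handling the ramification exponents cleanly: the residue-degree part and the count of primes follow almost immediately from CRT, but pinning down the $e_i$ requires either the length/dimension bookkeeping above in the localized Dedekind domain, or an equivalent argument comparing the filtration of $\Ocal_{K,\gP_i}$ by powers of its maximal ideal with the filtration of $\FF_p[x]/(\bar\phi_i^{e_i})$ by powers of $(\bar\phi_i)$. One must be careful that these filtrations actually correspond under the isomorphism $\Ocal_K \tensor \ZZ_{(p)} \cong \ZZ[\alpha] \tensor \ZZ_{(p)}$; the cleanest way is to note that $\gP_i / p$ and the ideal $(\bar\phi_i)$ in the matching CRT factor generate the same maximal ideal in the same Artinian quotient ring, so their powers coincide and the nilpotency indices agree. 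Everything else is routine commutative algebra, and I would present it at that level of detail rather than belaboring the module-length computations.
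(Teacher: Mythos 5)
The paper does not prove Theorem~\ref{DedCrit}: it simply cites Narkiewicz and Keith Conrad's expository note for a proof, so there is no ``paper's own proof'' to compare against. Evaluated on its own merits, your argument is correct and is the standard modern treatment: reduce to the ring $\ZZ[\alpha]/p\ZZ[\alpha]\cong\FF_p[x]/(\bar f)$, split it by CRT into local Artinian factors $\FF_p[x]/(\bar\phi_i^{e_i})$, use $p\nmid[\Ocal_K:\ZZ[\alpha]]$ to identify this quotient with $\Ocal_K/p\Ocal_K$ after localizing at $(p)$, read off the primes above $p$ and their residue degrees from the maximal ideals and residue fields of the factors, and finally obtain the ramification exponents by matching each local factor $R_{\gP_i}/\gP_i^{f_i}R_{\gP_i}$ with $\FF_p[x]/(\bar\phi_i^{e_i})$ (either via $\FF_p$-dimension $f_i\deg\phi_i=e_i\deg\phi_i$ or via nilpotency index of the maximal ideal). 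The global dimension count $\deg f=\sum e_i\deg\phi_i$ you mention is consistent but not by itself sufficient to pin down each $f_i$; your factor-by-factor comparison is what actually closes the argument, and you do state it. This is essentially the same route Conrad's note takes, so it lines up with the reference the paper points to.
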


Many texts on algebraic number theory contain a proof; e.g., \cite[Theorem 4.33]{Nark}. For a nice expository proof see Keith Conrad's note \cite{KCDedekind}.  

%\href{https://kconrad.math.uconn.edu/blurbs/gradnumthy/dedekindf.pdf}{https://kconrad.math.uconn.edu/blurbs/gradnumthy/dedekindf.pdf}. 

Using this criterion, Dedekind was the first to demonstrate a number field that was not monogenic. Dedekind considered the cubic field generated by a root of $x^3-x^2-2x-8$. He showed that the prime 2 split completely. If there was a possible power integral basis, then one would have to find a cubic polynomial generating the same number field that splits completely into distinct linear factors modulo 2. Since there are only two distinct linear polynomials in $\FF_2[x]$, this is impossible. Hence the number field cannot be monogenic. We will use the same strategy as Dedekind to construct non-monogenic torsion fields. 

In fact, Dedekind showed something stronger than non-monogeneity. The methods outlined above show that $2$ will divide the index of any monogenic order in the maximal order. Such a prime is called an \emph{essential discriminant divisor}.\footnote{The terms \emph{common index divisor} and, confusingly, \emph{inessential discriminant divisor} also appear in the literature.} Hensel \cite{Hensel1894} showed that the essential discriminant divisors are exactly the primes whose splitting is too much for the number of irreducible factors of the correct degree to accommodate. 

\section{Non-monogenic torsion fields}\label{NonMonoTors}

Our goal is to find torsion fields where a prime $p$ splits into a large number of primes, but there are fewer irreducible polynomials of the correct degree in $\FF_p[x]$. In other words, we would like to find when the prime $p$ is an obstruction to monogeneity. %Naturally, small primes are our best candidates. 
As a biproduct of our methods, we will obtain an algorithm for computing the unramified essential discriminant divisors of $\QQ(E[n])$ and a proof that an elliptic curve over $\QQ$ without CM has infinitely many non-monogenic division fields. 

It will be convenient to denote the order of the matrix $\sigma_p$ modulo $n$, i.e., in $\GL_2\left(\ZZ/n\ZZ\right)$, by $\ord\left(\sigma_p,n\right)$. We are going to compute $\ord\left(\sigma_p,n\right)$ for a number of $n$ that are relatively prime to $p$ and then compare this to $[\QQ(E[n]):\QQ]$. Finding $[\QQ(E[n]):\QQ]$ is a difficult problem in general, so here we will focus on what we expect to be the generic case: 
\[[\QQ(E[n]):\QQ]=|\GL_2(\ZZ/n\ZZ)|.\]
The following corollary of Han will be useful for computing $|\GL_2(\ZZ/n\ZZ)|$:

\begin{lemma}[Corollary 2.8, \cite{HanordGL}]\label{orderofGL} Let $n>1$ be an integer and $n=p_1^{e_1}\cdots p_k^{e_k}$ its prime factorization. Then, 
\[|\GL_2(\ZZ/n\ZZ)|=\prod_{i=1}^k|\GL_2\left(\ZZ/p_i^{e_i}\ZZ\right)|=\prod_{i=1}^k p_i^{4\left(e_i-1\right)}\left(p_i^2-1\right)\left(p_i^2-p_i\right).\]
\end{lemma}

%\begin{proof}
%See \cite[Corollary 2.8]{HanordGL}.
%\end{proof} 

We will also need to know the number of irreducible polynomials in $\FF_p[x]$ of degree equal to $\ord\left(\sigma_p,n\right)$. The goal is to demonstrate that there are not enough irreducible polynomials of the proper degree, so Dedekind's criterion cannot hold. For this we will need the following result of Gauss \cite{GaussHigher}.

\begin{lemma}\label{irredpolys}
Let $\mu$ be the M\"obius function. The number of irreducible polynomials of degree $m$ in $\FF_p[x]$ is given by
\[\irred(m,p):=\dfrac{1}{m}\sum_{d\mid m}p^d\mu\left(\dfrac{m}{d}\right).\]
\end{lemma}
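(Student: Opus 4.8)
The plan is to give the classical argument of Gauss, deducing the formula from the factorization of $x^{p^m}-x$ together with M\"obius inversion. (Here, as in the application to Dedekind's criterion, ``irreducible polynomial'' is normalized to mean \emph{monic} irreducible, which is the quantity the stated formula counts.) First I would fix an algebraic closure $\ol{\FF_p}$ and record two standard facts: the polynomial $x^{p^m}-x\in\FF_p[x]$ is separable, since its derivative is $-1$, and its set of roots in $\ol{\FF_p}$ is exactly the unique subfield $\FF_{p^m}\subseteq\ol{\FF_p}$ of order $p^m$. Hence $x^{p^m}-x$ factors over $\FF_p$ as a \emph{squarefree} product of monic irreducible polynomials.

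Next I would identify precisely which monic irreducibles occur. If $\phi\in\FF_p[x]$ is monic irreducible of degree $d$ and $\beta\in\ol{\FF_p}$ is a root, then $\FF_p(\beta)=\FF_{p^d}$; a root of $x^{p^m}-x$ lies in $\FF_{p^m}$, and $\FF_{p^d}\subseteq\FF_{p^m}$ if and only if $d\mid m$, so $\phi\mid x^{p^m}-x$ exactly when $d\mid m$. Conversely, every element of $\FF_{p^m}$ is a root of $x^{p^m}-x$ and also of its own minimal polynomial over $\FF_p$, which is monic irreducible of some degree dividing $m$. Combining this with squarefreeness yields
\[
x^{p^m}-x=\prod_{d\mid m}\ \prod_{\substack{\phi\ \text{monic irreducible}\\ \deg\phi=d}}\phi(x).
\]

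Comparing degrees on the two sides gives the recursion
\[
p^m=\sum_{d\mid m} d\cdot\irred(d,p).
\]
Finally I would apply M\"obius inversion: putting $f(d):=d\cdot\irred(d,p)$, the displayed identity reads $p^m=\sum_{d\mid m}f(d)$, whence $f(m)=\sum_{d\mid m}\mu\!\left(\tfrac{m}{d}\right)p^d$, and dividing through by $m$ produces the claimed expression for $\irred(m,p)$.

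I do not anticipate a real obstacle here, as this is a well-known classical result; the only points that warrant explicit care are the separability of $x^{p^m}-x$ and the divisibility criterion ``$\phi\mid x^{p^m}-x \iff \deg\phi\mid m$,'' both of which are routine facts about finite fields.
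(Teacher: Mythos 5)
Your proof is correct and is the classical Gauss argument. The paper itself supplies no proof of this lemma, instead citing Gauss and a modern exposition (Chebolu--Min\'a\v{c}); the argument you give, namely the squarefree factorization of $x^{p^m}-x$ into the monic irreducibles of degree dividing $m$, the degree count $p^m=\sum_{d\mid m}d\cdot\irred(d,p)$, and M\"obius inversion, is precisely the standard route those references take, and your aside that ``irreducible'' must be read as ``monic irreducible'' is a sensible clarification of the statement.
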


One can see \cite{chebmincounting} for a modern proof.

Using SageMath \cite{Sage}, we can compute $\ord\left(\sigma_p,n\right)$, $\irred\left(\ord\left(\sigma_p,n\right),p\right)$, and $\frac{|\GL_2(\ZZ/n\ZZ)|}{\ord\left(\sigma_p,n\right)}$ for $n$ less than $1000$. Here $\frac{|\GL_2(\ZZ/n\ZZ)|}{\ord\left(\sigma_p,n\right)}$ is the number of primes $p$ splits into in $\QQ(E[n])$. If 
\[\irred(\ord\left(\sigma_p,n\right),p)<\frac{\left|\GL_2(\ZZ/n\ZZ)\right|}{\ord\left(\sigma_p,n\right)},\] 
then $p$ must divide the index of any monogenic order in $\Ocal_{\QQ(E[n])}$.

\begin{example}\label{p=2n=11} Take $p=2$. Over $\FF_2$ the possible ordinary traces of Frobenius are $\pm 1$. Hence the discriminant of the characteristic polynomial of Frobenius is
\[\Delta_{\pi}=1-8=-7.\]
Since $-7$ is square-free, we see $b_2=1$. For example, when $a_2=1$, the matrix representing the class of Frobenius is 
\[\sigma_2 = \begin{bmatrix} 
8/2 & (-7\cdot 8)/4 \\
1 & -6/2 
\end{bmatrix}=
\begin{bmatrix} 
4 & -14 \\
1 & -3 
\end{bmatrix}.\]
We find that $\sigma_2$ has order $10$ modulo $11$. Assuming $[\QQ(E[11]):\QQ]=|\GL_2(\ZZ/11\ZZ)|$, we compute that $2$ splits into $1320$ primes in $\QQ(E[11])$. However, there are only $99$ irreducible polynomials of degree 10 in $\FF_2[x]$. Hence, if $E$ is an elliptic curve over $\QQ$ such that the mod 11 Galois representation is surjective and with $a_2=1$, then $\QQ(E[11])$ is not monogenic. In particular, 2 will be an essential discriminant divisor for $\QQ(E[11])/\QQ$.

\end{example}

\begin{theorem}\label{maintables}
Let $p$, $a_p$, $b_p$, and $n$ be as in Table \ref{Frobenius2}, \ref{Frobenius3}, \ref{Frobenius5}, \ref{Frobenius7}, or \ref{Frobenius11}. If $E$ is an elliptic curve over $\QQ$ whose reduction at $p$ has trace of Frobenius $a_p$, index $b_p$, and such that the Galois representation
\[\rho_{E,n}:\Gal(\QQ(E[n])/\QQ)\to \GL_2(\ZZ/n\ZZ)\]
is surjective, then, for the listed $n$, the torsion field $\QQ(E[n])$ is \textbf{not} monogenic. Moreover, $p$ is an essential discriminant divisor.

If, instead of asking $\rho_{E,n}$ to be surjective, we require that $E$ is a Serre curve, then $\rho_{E,n}$ may have index 2 in $\GL_2(\ZZ/n\ZZ)$. %The {\color{red}red $n$} are $n$ for which our results no longer hold with this change. In other words, f
For the {\color{red} red $n$}, when $\left[\GL_2(\ZZ/n\ZZ):\rho_{E,n}\left(\Gal(\QQ(E[n])/\QQ)\right)\right]=2$, the prime $p$ is no longer an essential discriminant divisor.
\end{theorem}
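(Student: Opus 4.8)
The plan is to leverage Theorems~\ref{DandT} and~\ref{DedCrit} together with Lemmas~\ref{orderofGL} and~\ref{irredpolys} in a uniform way, reducing the statement for each row of each table to a finite arithmetic comparison. First I would fix a row $(p, a_p, b_p, n)$ and compute the matrix $\sigma_p$ from \eqref{Frob}; this requires first pinning down $\Delta_{\End}$, which is determined by $a_p$ and $b_p$ via $4p = a_p^2 - \Delta_{\End}b_p^2$ together with the constraint that $\Delta_{\End}$ be a valid discriminant of an imaginary quadratic order (this is where the case analysis of Section~\ref{EndFrob} enters, in particular the supersingular $a_p$'s). With $\sigma_p$ in hand I would compute $r := \ord(\sigma_p, n)$ in $\GL_2(\ZZ/n\ZZ)$.

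Next, under the hypothesis that $\rho_{E,n}$ is surjective, Theorem~\ref{DandT} says that the Frobenius at $p$ in $\Gal(\QQ(E[n])/\QQ)$ is represented by $\sigma_p \bmod n$; since $p$ is unramified in $\QQ(E[n])$ (also by Theorem~\ref{DandT}), the primes above $p$ all have residue degree exactly $r$ and there are $[\QQ(E[n]):\QQ]/r = |\GL_2(\ZZ/n\ZZ)|/r$ of them, using surjectivity and Lemma~\ref{orderofGL}. If $\QQ(E[n])$ were monogenic, say $\Ocal_{\QQ(E[n])} = \ZZ[\alpha]$ with minimal polynomial $f$, then $p \nmid [\Ocal_{\QQ(E[n])}:\ZZ[\alpha]] = 1$, so Dedekind's criterion (Theorem~\ref{DedCrit}) forces $f \bmod p$ to factor into $|\GL_2(\ZZ/n\ZZ)|/r$ distinct monic irreducibles each of degree $r$ in $\FF_p[x]$. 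But there are only $\irred(r,p)$ monic irreducibles of degree $r$ available, and the table rows are exactly those for which
\[
\irred(r,p) \;<\; \frac{|\GL_2(\ZZ/n\ZZ)|}{r},
\]
a contradiction. Hence $\QQ(E[n])$ is not monogenic; moreover the same counting shows $p$ divides the index of \emph{every} monogenic order in $\Ocal_{\QQ(E[n])}$, i.e.\ $p$ is an essential discriminant divisor (this is Hensel's characterization recalled in Section~\ref{DedekindSection}). The verification that each tabulated row genuinely satisfies the displayed inequality is a finite SageMath computation, and I would cite \cite{Sage} for it rather than reproduce it.

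For the Serre-curve addendum, the argument is the same except that $[\QQ(E[n]):\QQ] = \tfrac12 |\GL_2(\ZZ/n\ZZ)|$ when $\rho_{E,n}$ has index $2$, so $p$ now splits into $|\GL_2(\ZZ/n\ZZ)|/(2r)$ primes of residue degree $r$ (here one must check that $\sigma_p \bmod n$ actually lies in the index-$2$ subgroup cut out by the Serre obstruction, which it does since it is a genuine Frobenius for \emph{some} curve with that reduction data; alternatively $r$ could drop, but the displayed constant also halves). The red $n$ are precisely those for which halving the right-hand side of the inequality reverses it, i.e.\ $\irred(r,p) \ge |\GL_2(\ZZ/n\ZZ)|/(2r)$, so the counting obstruction disappears and one can no longer conclude $p$ is an essential discriminant divisor. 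I would state this as ``the obstruction vanishes'' rather than claim monogeneity, since failing this particular test does not prove $\QQ(E[n])$ \emph{is} monogenic.

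The main obstacle I anticipate is bookkeeping rather than depth: making sure $\Delta_{\End}$ (hence $\sigma_p$) is correctly determined in the supersingular and small-$p$ cases where several orders can occur, and being careful that the index-$2$ case for Serre curves still has $\sigma_p \bmod n$ in the correct coset so that the residue-degree count $r$ and the prime count $|\GL_2|/(2r)$ are the right ones to feed into Dedekind's criterion. Everything else is an application of the four cited results followed by a machine check of finitely many inequalities.
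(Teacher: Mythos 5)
Your proposal is correct and takes the same route the paper does (the argument appears in Section~\ref{NonMonoTors} and Example~\ref{p=2n=11} rather than as a freestanding proof): compute $\sigma_p$ via Duke--T\'oth, determine $r=\ord(\sigma_p,n)$, compare the prime count $|\GL_2(\ZZ/n\ZZ)|/r$ with $\irred(r,p)$, and invoke Dedekind's criterion (plus Hensel's characterization) to conclude $p$ is an essential discriminant divisor, with SageMath certifying the finitely many inequalities. One small correction to your Serre-curve paragraph: the hedge that ``$r$ could drop'' is unfounded --- $\sigma_p$ is the image of an actual Frobenius, so it already lies in $\rho_{E,n}(\Gal)$, and an element's order in a subgroup equals its order in the ambient group; only the degree $[\QQ(E[n]):\QQ]$, and hence the number of primes above $p$, can halve.
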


Note that employing the process outlined above for all primes up to $[\QQ(E[n]):\QQ]$, will identify all essential discriminant divisors. Thus if a positive integer $n$ less than $1000$ is not listed in one of the tables below, then the primes 2, 3, 5, 7, and 11 are not essential discriminant divisors of $\QQ(E[n])/\QQ$; however, even if there are no essential discriminant divisors it may still be the case that $\QQ(E[n])$ is not monogenic over $\QQ$.

\begin{table}[h!]
\centering
\begin{tabular}{ | m{.5cm} | m{2.5cm}| m{8.5cm} | }
\hline
$a_2$ & $\sigma_2$ & non-monogenic $n$ \\
\hline
0 &  \begin{center} \Scale[.9]{$\begin{bmatrix} 
0 & 1 \\
2 & 0
\end{bmatrix}$}\end{center} & 3, {\color{red}5,} 9, 11, 15, {\color{red}17}, 21, {\color{red}27,} 33, 43, 51, 57, 63, 85, 91, 93, 105, 117,
129, 171, 195, 255, {\color{red}257,} 273, 315, 331, 341, 381, 455, 513, 585, {\color{red}657,}
683, 771, 819, 993  \\ [.1ex]
\hline
1 &  \begin{center} \Scale[.9]{$\begin{bmatrix} 
1 & 1 \\
-2 & 0
\end{bmatrix}$}\end{center} & 11  \\ [.1ex]
\hline
-1  & \begin{center}\Scale[.9]{$\begin{bmatrix} 
0 & 1 \\
-2 & -1
\end{bmatrix}$} \end{center} & 11, 23  \\ [.1ex]
\hline
2  & \begin{center} \Scale[.9]{$\begin{bmatrix} 
1 & 1 \\
-1 & 1
\end{bmatrix}$} \end{center} & 5, 13, 15, {\color{red}17,} 41, 51, 65, 85, 91, 105, 117, {\color{red}145,} 195, 205, 255, {\color{red}257,}
273, 315, 455, 565, 585, 771, 819  \\
\hline
 $-2$  & \begin{center} \Scale[.9]{$\begin{bmatrix} 
-1 & 1 \\
-1 & -1 
\end{bmatrix}$} \end{center} & 5, 13, 15, {\color{red}17,} 41, 51, 65, 85, 91, 105, 117, {\color{red}145,} 195, 205, 255, {\color{red}257,}
273, 315, 455, 565, 585, 771, 819 \\
\hline
\end{tabular}
\vspace{.05 in}
\caption{Using the splitting of 2 in $\QQ(E[n])$ to show non-monogeneity for $n<1000$.}
\label{Frobenius2}
\end{table}

\begin{remark}\label{extensions} 
One can generalize the above algorithm to allow for elliptic curves over arbitrary number fields. If $K$ is a number field in which there is a prime above $p$ with residue class degree 1 and \[[K(E[n]):K]=|\GL_2(\ZZ/n\ZZ)|,\] then our tables still identify non-monogenic torsion fields. For the relevant generalization of Dedekind's criterion see \cite[Chapter 1, Theorem 7.4]{Janusz}.

When the residue class degree is not 1 or the degree of the $n$-torsion field is smaller, the mechanics of our algorithm still work. However, there will be fewer obstruction to monogeneity. Increasing the residue class degree yields more irreducible polynomials of the correct degree, and a smaller $n$-torsion field will lead to less splitting.
\end{remark}

%%%%%%%%%%%%%%%%%%%%%%%%%%%%%%%%%%%%%%%%%%%%%%%%%%%%%%%%%%%

Each of the $n$ that are not red in the tables corresponds to infinitely many elliptic curves, in fact a positive proportion of elliptic curves. To see this, we summarize a theorem of Jones:
\begin{theorem}\label{AlmostAll}
\cite[Theorem 4]{Jones} Almost all elliptic curves are Serre curves.
\end{theorem}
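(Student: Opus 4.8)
The plan is to follow the strategy of Jones \cite{Jones}, which in turn refines the density method introduced by Duke for mod-$\ell$ surjectivity. Fix a height ordering: for $X>0$ let $\Ecal(X)$ be the set of curves $E_{a,b}\colon y^2=x^3+ax+b$ with $a,b\in\ZZ$, $4a^3+27b^2\neq 0$, $(a,b)$ minimal (no prime $q$ with $q^4\mid a$ and $q^6\mid b$), and $\max(|a|^3,b^2)\le X$; then $\#\Ecal(X)\asymp X^{5/6}$. Recall Serre's observation that for every non-CM $E/\QQ$ the adelic image $\rho_E\bigl(\Gal(\Qbar/\QQ)\bigr)$ lies in a fixed index-$2$ subgroup $H_E\le\GL_2(\hat\ZZ)$, the obstruction being the forced containment $\QQ\bigl(\sqrt{\disc E}\bigr)\subseteq\QQ(E[2])\cap\QQ^{\mathrm{ab}}$, and that $E$ is a Serre curve exactly when $\rho_E\bigl(\Gal(\Qbar/\QQ)\bigr)=H_E$. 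So it suffices to prove $\#\{E\in\Ecal(X):E\text{ is not a Serre curve}\}=o\bigl(\#\Ecal(X)\bigr)$.

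First I would invoke the finite-level characterization of Serre curves: a non-CM $E$ fails to be a Serre curve if and only if at least one of the following holds --- (i) $\rho_{E,\ell}$ is not surjective onto $\GL_2(\ZZ/\ell\ZZ)$ for some prime $\ell$; (ii) $\QQ(E[\ell_1])\cap\QQ(E[\ell_2])\neq\QQ$ for some pair of distinct primes $\ell_1,\ell_2$; or (iii) the entanglement between $\QQ(E[2])$ (or some $\QQ(E[\ell])$) and the cyclotomic tower strictly exceeds the forced $\QQ(\sqrt{\disc E})\subseteq\QQ(E[2])\cap\QQ^{\mathrm{ab}}$. Hence the non-Serre curves in $\Ecal(X)$ lie in the union, over primes and pairs of primes, of the resulting ``bad'' sets, together with the CM curves, which number only $O(X^{1/2})=o\bigl(\#\Ecal(X)\bigr)$ (dominated by the twist family of $j=0$) and may be set aside.

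Next I would split into a bounded range $\ell\le M$ and a tail $\ell>M$. For each fixed prime $\ell$, being bad at $\ell$ confines $(a,b)$ to one of finitely many thin subsets of $\AA^2$: non-surjectivity of $\rho_{E,\ell}$ forces $j(E)$ onto a modular curve $X_H(\ell)$ attached to a maximal proper $H<\GL_2(\ZZ/\ell\ZZ)$, and each entanglement failure cuts out a thin set in the same way; likewise for the finitely many pairs $\ell_1,\ell_2\le M$. Quantitative Hilbert irreducibility, in the large-sieve form used by Jones (after Cojocaru--Hall), gives $\#\{E\in\Ecal(X):E\text{ bad at }\ell\}\ll_\ell X^{5/6-\delta}$ for some $\delta=\delta(\ell)>0$; summing the finitely many $\ell\le M$ and pairs keeps this $o\bigl(\#\Ecal(X)\bigr)$.

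The genuine difficulty is the tail, which must be controlled uniformly so that, after letting $M=M(X)\to\infty$ slowly, $\#\{E\in\Ecal(X):E\text{ bad at some }\ell>M\}=o\bigl(\#\Ecal(X)\bigr)$. Here I would use Dickson's classification of subgroups of $\GL_2(\FF_\ell)$ together with two inputs: Mazur's theorem rules out the Borel case (a rational $\ell$-isogeny) for $\ell$ beyond an absolute bound, and Serre's results rule out the exceptional ($A_4,S_4,A_5$) projective images over $\QQ$ for large $\ell$, so for large $\ell$ bad-at-$\ell$ forces the image into the normalizer of a split or nonsplit Cartan. Such a curve satisfies, for every prime $p$ of good reduction, a congruence mod $\ell$ on the trace $a_p$ (namely $\ell\mid a_p$, or $a_p^2-4p$ lies in the prescribed square-class mod $\ell$), and this confines the reduction of $(a,b)$ modulo $p$ to a subset of density bounded away from $1$ uniformly in $\ell$. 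Feeding this into the large sieve with sieving primes $p$ up to a slowly growing bound and moduli $\ell\in(M,Z]$, and treating the very large $\ell>Z$ separately by a counting estimate for rational points of bounded height on the relevant Cartan modular curves (whose genus grows, so Faltings applies; for the split case one may now also quote the determination of $X^+_{\mathrm{sp}}(\ell)(\QQ)$ by Bilu--Parent--Rebolledo), yields a bound of the shape $\#\Ecal(X)/M+o\bigl(\#\Ecal(X)\bigr)$. Assembling the three ranges and sending $M\to\infty$ finishes the argument. The main obstacle is precisely this step: one needs both the group theory (bad $\Rightarrow$ Cartan-normalizer for large $\ell$) and a sieve that is uniform in the modulus $\ell$, and fitting together the cutoffs for the sieving primes, for $M$, and for $Z$ is the delicate part.
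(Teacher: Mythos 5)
The paper does not prove this statement at all: Theorem \ref{AlmostAll} is quoted verbatim from \cite[Theorem 4]{Jones} and is used as a black box, so there is no in-paper argument to compare yours against. Judged as a reconstruction of the Duke--Jones proof, your outline has the right skeleton: order by height so that $\#\Ecal(X)\asymp X^{5/6}$, discard the $O(X^{1/2})$ CM curves, reduce ``not a Serre curve'' to a finite-level failure (non-surjectivity mod some $\ell$ together with an entanglement condition at a single composite level determined by $\disc E$), handle each fixed level by a quantitative thin-set/sieve bound, and control the tail over large $\ell$ uniformly by combining Dickson's classification with Mazur (no Borel for large $\ell$) and Serre (no exceptional projective image for large $\ell$), so that only Cartan normalizers survive and impose positive-density trace conditions $\ell\mid a_p$ or conditions on the square class of $a_p^2-4p$, which the large sieve then exploits.

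The one step that would fail as written is your treatment of the very large moduli $\ell>Z$ via Faltings (or Bilu--Parent--Rebolledo) applied to $X^{+}_{\mathrm{sp}}(\ell)$ and $X^{+}_{\mathrm{ns}}(\ell)$. Faltings is ineffective and gives no bound on either the number or the height of rational points that is uniform in $\ell$, so it cannot be summed over $\ell>Z$; moreover a single rational $j$-invariant on such a curve corresponds to an entire quadratic-twist family of size a positive power of $X$ inside $\Ecal(X)$, so finiteness of points on each modular curve does not by itself give $o(\#\Ecal(X))$. The actual argument never leaves the sieve: for $\ell>2\sqrt{p}$ the congruence $\ell\mid a_p$ forced by the Cartan-normalizer case degenerates to the equality $a_p=0$, i.e., supersingularity at a set of primes $p$ of density $\tfrac12$, and the density of $(a,b)\bmod p$ with $a_p=0$ is $O(p^{-1/2})$; this makes the sieve bound for the entire range of large $\ell$ at once, with no separate Diophantine input. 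With that replacement (and with your condition (iii) sharpened to Jones's precise finite-level criterion at the level $2\cdot|\disc_{\mathrm{sf}}|$ rather than an open-ended list of entanglements), your sketch matches the proof in the literature.
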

Jones counts elliptic curves by na\"ive height, which works amicably with our methods for for $p\geq 5$. For $p=2$ or $3$, we note that a given trace of Frobenius corresponds to a positive density of elliptic curves. Hence almost all elliptic curves with that trace of Frobenius are Serre curves. Thus every $n$ in Tables \ref{Frobenius2}, \ref{Frobenius3}, \ref{Frobenius5}, \ref{Frobenius7}, and \ref{Frobenius11} that is not red corresponds to infinitely many elliptic curves. The interested reader should note that Radhakrishnan \cite{Radhakrishnan} computed an asymptotic formula for the number of non-Serre curves in his thesis.

%%%%%%%%%%%%%%%%%%%%%%%%%%%%%%%%%%%%%%%%%%%%%%%%%%%%%%%%%%%%

Glancing over our tables, it appears that supersingular primes are often obstructions to monogeneity. We can make this precise in a certain case:

\begin{theorem}\label{SupersingularVertically}
Let $p>3$ be a prime for which $E$ has good supersingular reduction and suppose $[\QQ(E[p+1]):\QQ]>p^2-p,$ 
%\geq \frac{1}{2}|\GL_2(\ZZ/(p+1)\ZZ)|, 
then $\QQ(E[p+1])$ is not monogenic over $\QQ$. Specifically, $p$ is an essential discriminant divisor of $\QQ(E[p+1])$ over $\QQ$.
\end{theorem}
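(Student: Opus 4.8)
\emph{Proof proposal.} The plan is to mimic Dedekind's original argument, using the Duke--T\'oth matrix $\sigma_p$ (Theorem \ref{DandT}) to control the splitting of $p$ and Dedekind's criterion (Theorem \ref{DedCrit}) to produce the obstruction. First I would pin down $\sigma_p$. Since $p>3$ and $E$ has supersingular reduction at $p$, we have $a_p\equiv 0\bmod p$ together with $|a_p|\le 2\sqrt{p}$, which forces $a_p=0$; thus the Weil polynomial of the Frobenius is $x^2+p$. A one-line computation from \eqref{Frob} (valid no matter which supersingular order $\End(E)$ turns out to be) shows that the integral matrix $\sigma_p$ has trace $a_p=0$ and determinant $p$, so by Cayley--Hamilton $\sigma_p^2=-pI$.

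Next I would compute $\ord(\sigma_p,p+1)$. With $n=p+1$, which is prime to $p$ so that Theorem \ref{DandT} applies, reduction gives $\sigma_p^2\equiv -pI\equiv I\bmod(p+1)$, so the order divides $2$; and it is not $1$, since modulo $p+1$ the determinant of $\sigma_p$ is $p\equiv -1\not\equiv 1$ (as $p+1>2$). Hence $\ord(\sigma_p,p+1)=2$. By Theorem \ref{DandT}, $p$ is unramified in the Galois extension $\QQ(E[p+1])/\QQ$ and the Frobenius has order $2$, so $p$ splits into exactly $[\QQ(E[p+1]):\QQ]/2$ primes, each of residue degree $2$ and ramification index $1$.

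Finally I would invoke Dedekind's criterion. If $p$ were not an essential discriminant divisor of $K:=\QQ(E[p+1])$, there would exist a generator $\alpha$ of $K/\QQ$ with $p\nmid[\Ocal_K:\ZZ[\alpha]]$, and then Theorem \ref{DedCrit} would force the minimal polynomial of $\alpha$ to factor modulo $p$ into $[\QQ(E[p+1]):\QQ]/2$ monic irreducible quadratics in $\FF_p[x]$, necessarily distinct because every ramification index equals $1$. But Lemma \ref{irredpolys} provides only $\irred(2,p)=\tfrac{1}{2}(p^2-p)$ such polynomials, whereas the hypothesis $[\QQ(E[p+1]):\QQ]>p^2-p$ means we would need more than $\tfrac{1}{2}(p^2-p)$ of them---a contradiction. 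Hence $p$ divides the index of every order $\ZZ[\alpha]$ with $\QQ(\alpha)=K$; that is, $p$ is an essential discriminant divisor, and in particular $\Ocal_K$ admits no power integral basis.

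The argument is short, and the only step that needs real care is the middle one: verifying that $\sigma_p$ has order exactly $2$---rather than $1$---modulo $p+1$, and then correctly converting this, together with the unramifiedness coming from Theorem \ref{DandT}, into the assertion that $p$ factors as a product of that many \emph{distinct} primes of residue degree $2$, so that Dedekind's criterion genuinely demands distinct irreducible quadratic factors. The counting against $\irred(2,p)$ is then immediate from the hypothesis on $[\QQ(E[p+1]):\QQ]$.
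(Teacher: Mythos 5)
Your proof is correct and follows essentially the same strategy as the paper: pin down that $a_p=0$, show $\sigma_p$ has order $2$ modulo $p+1$, conclude that $p$ splits into more than $\tfrac{1}{2}(p^2-p)$ distinct primes of residue degree $2$, and then contradict the count $\irred(2,p)=\tfrac{1}{2}(p^2-p)$ via Dedekind's criterion. The one genuine refinement is your Cayley--Hamilton observation that $\sigma_p$ always has trace $a_p=0$ and determinant $p$ (both read off directly from \eqref{Frob}), hence $\sigma_p^2=-pI\equiv I\bmod (p+1)$; this handles both $b_p=1$ and $b_p=2$ at once, whereas the paper writes out the two Frobenius matrices explicitly and checks their orders case by case. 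You also take care to rule out order $1$ via the determinant congruence $p\equiv -1\not\equiv 1\bmod(p+1)$, a detail the paper leaves implicit in the phrase ``we compute that both these matrices have order $2$.''
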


\begin{proof}
Since $E$ is supersingular at $p>3$, we know $a_p=0$. Thus the matrix representing the class of Frobenius is either
\[\begin{bmatrix} 
0 & 1 \\
-p & 0
\end{bmatrix} \ \ \  \text{ or } \ \ \ 
\begin{bmatrix} 
1 & 2 \\
\frac{-p-1}{2} & -1
\end{bmatrix},\]
depending on whether $b_p=1$ or 2. We compute that both these matrices have order 2 modulo $p+1$. Thus there are more than $\frac{p^2-p}{2}$ %at least $\frac{1}{4}|\GL_2(\ZZ/(p+1)\ZZ)|$ 
primes above $p$ in $\QQ(E[p+1])$ and all of these primes above $p$ have residue degree 2. Using Lemma \ref{irredpolys}, there are exactly $\frac{p^2-p}{2}$ irreducible polynomials of degree 2 in $\FF_p[x]$. %Since $[\QQ(E[p+1]):\QQ]>p^2-p$, 
Thus the splitting of $p$ cannot be accommodated by irreducible polynomials of degree 2 in $\FF_p[x]$ and our result follows.
\end{proof}

Using the estimate
\[|\GL_2(\ZZ/(p+1)\ZZ)|\geq (p+1)^4-\frac{1}{2}(p+1)^4-\frac{1}{4}(p+1)^4+\frac{1}{8}(p+1)^4=\frac{3}{8}(p+1)^4,\] we expect $[\QQ(E[p+1]):\QQ]$ to often be much larger than $p^2-p$. In particular, Theorem \ref{SupersingularVertically} holds for Serre curves. 

Combining the ideas above we see a given elliptic curve over $\QQ$ without CM has infinitely many non-monogenic division fields.

\begin{corollary}\label{InfiniteNonMono}
Let $E/\QQ$ be an elliptic curve without CM, then for infinitely many $n>1$ the division field $\QQ(E[n])$ is not monogenic.
\end{corollary}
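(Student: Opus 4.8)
The plan is to apply Theorem \ref{SupersingularVertically} over an infinite set of supersingular primes. First I would recall Elkies' theorem: any elliptic curve $E/\QQ$ (indeed over any number field) has infinitely many primes $p$ of good supersingular reduction. Restricting to $p > 3$ discards only finitely many, so there are still infinitely many supersingular primes $p > 3$ for $E$. For each such $p$, Theorem \ref{SupersingularVertically} tells us that $\QQ(E[p+1])$ is non-monogenic (with $p$ an essential discriminant divisor) as soon as $[\QQ(E[p+1]):\QQ] > p^2 - p$.

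The remaining point is to verify this degree inequality holds for all but finitely many of these supersingular primes. Here I would invoke the hypothesis that $E$ has no CM: by Serre's open image theorem, the image of the adelic Galois representation of $E$ has finite index in $\GL_2(\hat{\ZZ})$, so there is a constant $C_E$ with $[\GL_2(\ZZ/n\ZZ) : \rho_{E,n}(\Gal(\Qbar/\QQ))] \le C_E$ for every $n$. Combining this with the lower bound $|\GL_2(\ZZ/(p+1)\ZZ)| \ge \tfrac{3}{8}(p+1)^4$ noted just before the corollary, we get
\[
[\QQ(E[p+1]):\QQ] \;\ge\; \frac{3}{8 C_E}(p+1)^4,
\]
which exceeds $p^2 - p$ for all sufficiently large $p$. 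Hence for all but finitely many supersingular primes $p > 3$ the field $\QQ(E[p+1])$ is non-monogenic, and since distinct large $p$ give distinct values of $n = p+1$, we obtain infinitely many $n$ with $\QQ(E[n])$ not monogenic.

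The main obstacle is really just assembling the two external inputs cleanly: one needs the infinitude of supersingular primes (Elkies) and a uniform index bound for the mod-$n$ images (Serre's open image theorem, which is exactly where the no-CM hypothesis is used). Once those are in hand, everything reduces to the elementary size estimate above plus a direct citation of Theorem \ref{SupersingularVertically}; no delicate computation is needed. One small bookkeeping check is that $C_E$ can indeed be taken independent of $n$ — this follows because the index of the image in $\GL_2(\hat\ZZ)$ is finite, and the index of the mod-$n$ image divides it.
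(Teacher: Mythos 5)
Your proposal is correct and follows essentially the same route as the paper: invoke Elkies for infinitely many supersingular primes, use Serre's open image theorem to get a uniform lower bound $[\QQ(E[p+1]):\QQ] \geq \frac{3}{8I}(p+1)^4$, and then conclude via Theorem \ref{SupersingularVertically}. The only cosmetic difference is the safety factor in the inequality (the paper writes $\frac{3}{16I}$ rather than your sharper $\frac{3}{8C_E}$, which makes no difference to the conclusion), and you correctly note the bookkeeping point that the mod-$n$ index divides the adelic index.
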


\begin{proof}
Serre's open image theorem \cite{SerreOpenImage} shows that the image of 
\[\rho_E:\Gal\left(\ol{\QQ}/\QQ\right)\to \Aut\left(E_{\tors}\right)\cong \GL_2\left(\hat{\ZZ}\right)\] is open and hence has finite index. Write $I:=\left[\GL_2\left(\hat{\ZZ}\right):\rho_E\left(\Gal\left(\ol{\QQ}/\QQ\right)\right)\right]$. 

Note that all sufficiently large primes $p$ satisfy 
\begin{equation}\label{IInequality}
\frac{3}{16I}(p+1)^4>p^2-p.
\end{equation} 
%$\frac{3}{32I}(p+1)^4>\frac{p^2-p}{2}$. 
Elkies \cite{Elkies} has shown that $E$ has infinitely many supersingular primes. Hence there are infinitely many supersingular primes satisfying \eqref{IInequality}. For these primes, the splitting of $p$ in $\QQ(E[p+1])$ cannot be mirrored by the factorization of a polynomial in $\FF_p[x]$. Thus, the division field $\QQ(E[p+1])$ is not monogenic over $\QQ$. 
\end{proof}

\section{Explicit Families}\label{Families}

We would like to construct explicit families of elliptic curves with non-monogenic torsion fields.
Daniels \cite{Daniels} has shown the family
\begin{equation}\label{DanielsFamily}
E_t: \ \ y^2+xy=x^3+t,
\end{equation}
with $t\in \QQ$, consists of Serre curves with at most 12 exceptions. Using the methods outlined about, we can quickly compute that if $t$ is odd, then the trace of Frobenius at $p=2$ is $-1$. Hence $\QQ(E_t[11])$ and $\QQ(E_t[23])$ are not monogenic with the exception of at most 12 values of $t$. Note that if $t$ is even, then $E_t$ is singular at 2. There is nothing particularly special about 2 here and we can do the same thing for other small primes. For example, if $t\equiv 2 \bmod 7$, then $E_t$ is supersingular at 7 and for $n$ equal to 4, 8, 16, 24, 43, 48, 75, 80, 86, 96, 100, 120, 150, 160, 172, 191, 200, 240, 300, 344, 382, 400, 480, 600, 684, 688, 764, 774, 800, 817, or 912, the division field $\QQ(E[n])$ is not monogenic over $\QQ$, again with the exception of at most 12 values of $t$.

For another family, we require a result of Mazur. 
\begin{theorem}[Theorem 4 of \cite{MazurRatIso}]\label{MazurSemistable}
Let $E/\QQ$ be a semistable elliptic curve. For every prime $p\geq 11$, the Galois representation
\[\rho_{E,p}:E[p]\to \GL_2(\ZZ/p\ZZ)\]
is surjective. %No semistable elliptic curve has an exceptional prime $\geq 11$.
\end{theorem}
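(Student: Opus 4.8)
Since Theorem~\ref{MazurSemistable} is Mazur's, I sketch the architecture of a proof one would reconstruct. Write $G:=\rho_{E,p}\bigl(\Gal(\Qbar/\QQ)\bigr)\subseteq\GL_2(\ZZ/p\ZZ)$. Because $\det\rho_{E,p}$ is the mod $p$ cyclotomic character $\ol{\chi}_p$, which is surjective, Dickson's classification of subgroups of $\GL_2(\ZZ/p\ZZ)$ leaves only four possibilities: $G=\GL_2(\ZZ/p\ZZ)$; $G$ lies in a Borel subgroup; $G$ lies in the normalizer of a (split or non-split) Cartan subgroup; or the image of $G$ in $\mathrm{PGL}_2(\ZZ/p\ZZ)$ is isomorphic to $A_4$, $S_4$, or $A_5$. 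The plan is to exclude the three proper cases for $p\geq 11$, using in each the semistability of $E$ to force $\rho_{E,p}$ to be nearly unramified.

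I would begin with the exceptional case, which succumbs to a purely local argument at $p$. Since $E$ is semistable it has good or multiplicative reduction at $p$; in the good ordinary and in the multiplicative cases $\rho_{E,p}|_{I_p}$ contains, up to conjugacy, $\mathrm{diag}(\zeta,1)$ with $\zeta$ of exact order $p-1$, while in the good supersingular case the image of $I_p$ is a full non-split Cartan of order $p^2-1$. Either way $G$ contains an element whose image in $\mathrm{PGL}_2(\ZZ/p\ZZ)$ has order at least $p-1\geq 10$; since every element of $A_4$, $S_4$, $A_5$ has order at most $5$, the exceptional case cannot occur (already for $p\geq 7$).

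I would then treat the Borel case. A $\Gal(\Qbar/\QQ)$-stable line in $E[p]$ gives $\rho_{E,p}^{\mathrm{ss}}=\lambda_1\oplus\lambda_2$ with $\lambda_1\lambda_2=\ol{\chi}_p$. At a prime $\ell\neq p$ of multiplicative reduction the Tate curve shows $\rho_{E,p}|_{I_\ell}$ is unipotent, hence $\lambda_1,\lambda_2$ are unramified outside $p$ and so, by Kronecker--Weber, powers of $\ol{\chi}_p$. Semistability at $p$ pins down the exponents: if $E$ has good reduction at $p$ then $E[p]$ prolongs to a finite flat group scheme over $\ZZ_p$, so by Oort--Tate each Jordan--H\"older character of $I_p$ is trivial or $\ol{\chi}_p$; the multiplicative case gives the same via the Tate curve. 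Hence $\{\lambda_1,\lambda_2\}=\{1,\ol{\chi}_p\}$, so either $E$ has a rational point of order $p$ or, with $C\subset E[p]$ the stable line on which Galois acts through $\ol{\chi}_p$, the $\QQ$-isogenous (still semistable) curve $E/C$ does. Both contradict Mazur's classification of torsion, which forbids a rational point of order $11$ or of order $\geq 13$.

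The remaining, and genuinely hard, case is $G\subseteq N$, the normalizer of a Cartan $C$. The composite $\epsilon\colon\Gal(\Qbar/\QQ)\twoheadrightarrow N/C\cong\{\pm1\}$ is a quadratic character; since $N$ has order prime to $p$ it contains no non-trivial unipotent, so at a prime $\ell\neq p$ of multiplicative reduction the (unipotent) image of inertia is trivial, and $\epsilon$ is unramified outside $p$. Thus $\epsilon$ cuts out $K=\QQ(\sqrt{p^*})$, $p^*=(-1)^{(p-1)/2}p$ (if $\epsilon=1$ then $G\subseteq C$ is reducible and we are back in the previous paragraph). Over $K$ the representation is abelian with image in $C$, and $E_K$ is still semistable, since $K/\QQ$ is unramified away from $p$ and multiplicative reduction survives the ramified base change at $p$. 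One is reduced to bounding an abelian character $\psi$ of $\Gal(\Qbar/K)$, valued in $(\ZZ/p\ZZ)^\times$ or $\FF_{p^2}^\times$ and unramified outside the prime above $p$; the local structure of $\rho_{E,p}$ at $p$ forces the image of $\psi$ to be large, so the ray class group of $K$ of $p$-power conductor must surject onto a correspondingly large cyclic group, contradicting the analytic class number formula (the bound on $h_K$) for $p\geq 11$, with the finitely many small primes checked by hand. This is precisely the semistable case of \emph{Rational isogenies of prime degree} \cite{MazurRatIso}, and it is the real obstacle: every other step is formal modulo the (cited) torsion theorem and standard finite-flat-group-scheme theory, but here one genuinely needs the class number formula --- equivalently, control of the rational points of $X_{\mathrm{split}}(p)$ and $X_{\mathrm{ns}}(p)$. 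With the three proper cases ruled out, $\rho_{E,p}$ is surjective.
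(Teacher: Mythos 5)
The paper does not prove this result --- it is quoted verbatim from Mazur's \emph{Rational isogenies of prime degree} \cite{MazurRatIso} and used as a black box in Section~\ref{Families} --- so there is no internal argument to compare against. Your sketch of the Serre--Mazur blueprint (Dickson's classification; rule out the exceptional case via the order of tame inertia at $p$; rule out Borel via semistability plus finite-flat/Tate-curve local analysis plus the torsion theorem; rule out the Cartan normalizer case via the quadratic character $\epsilon$) is the right architecture, and the exceptional and Borel paragraphs are essentially correct.

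Two points in the Cartan paragraph need repair, though. First, the step ``if $\epsilon=1$ then $G\subseteq C$ is reducible'' is false when $C$ is a \emph{non-split} Cartan, which acts irreducibly on $\mathbb{F}_p^2$; you cannot simply fall back to the Borel paragraph. The non-split case with $\epsilon$ trivial needs its own argument. Second --- and this is the more substantive point --- you have the relative difficulty of the cases inverted for the semistable setting. For a semistable $E/\mathbb{Q}$ the Cartan normalizer case is in fact \emph{elementary}: semistability forces $\epsilon|_{I_\ell}=1$ at every bad $\ell\neq p$ (no nontrivial unipotents in $N$), and a short local analysis at $p$ (supersingular inertia is cyclic of order $p^2-1$, hence lands in $C$; ordinary/multiplicative inertia is diagonal of order $p-1$, hence lands in $C$ and is incompatible with a non-split $N$ altogether) shows $\epsilon|_{I_p}=1$ as well, so $\epsilon$ is everywhere unramified and hence trivial. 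Then $G\subseteq C$: split Cartan is the Borel case; for non-split Cartan one takes a prime $\ell\neq p$ of multiplicative reduction (which exists since $E$ has conductor $>1$) and notes that $\rho(\mathrm{Frob}_\ell)$ has $\mathbb{F}_p$-rational eigenvalues $u$, $u\ell$ with $u=\pm1$, so as an element of $C_{\mathrm{ns}}$ it must be scalar, forcing $a_\ell=\pm2$ and contradicting $a_\ell=\pm1$ for $p\geq5$. No class number formula, no Eisenstein ideal, and no rational points on $X_{\mathrm{split}}(p)$ or $X_{\mathrm{ns}}(p)$ are needed here; those enter only for \emph{non}-semistable curves (uniformity questions), and the non-split curve was not handled until Bilu--Parent--Rebolledo decades later. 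For semistable curves the genuinely deep input sits in your Borel paragraph, namely Mazur's torsion theorem.
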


Recall that for an elliptic curve $E$, if the standard coefficient of the short Weierstrass form $c_4$ and the discriminant $\Delta$ are relatively prime, then $E$ is semistable. See \cite[VII.5]{AEC}. We consider an elliptic curve over $\FF_p$ with a given trace of Frobenius. For example, the elliptic curve $y^2+y=x^3+x^2+1$ over $\FF_2$ has trace of Frobenius $a_2=2$. Note that $c_4$ is $16$ and does not depend on the coefficient $a_6$. Hence, the family of rational elliptic curves
\[E_s: \ \ y^2+y=x^3+x^2+s\]
also has $c_4=16$. One computes the discriminant is $\Delta_s=-432 s^{2} - 280 s - 43$. Since $\Delta_s$ is always odd, $\gcd\left(\Delta_s,c_4\right)=1$. Therefore, $E_s$ is a family of semistable elliptic curves. Using Theorem \ref{MazurSemistable} in conjunction with our methods outlined above, we find that every curve in the family $E_s$ has non-monogenic $13$ and $41$ division fields.

Similarly to the above,
\begin{equation*}
E_{u,v}: y^2 +uy = x^3+vx^2
\end{equation*} 
has discriminant $u^2\left(16v^3+27u^2\right)$ and the coefficient $c_4$ is equal to $16 v^2$. If we require that $u$ is odd, $v$ is even, and $\gcd(3u,v)=1$, then $E_{u,v}$ is a family of semistable elliptic curves that has trace of Frobenius $a_2=0$. Hence the 11, 43, 331, and 683 division fields are not monogenic over $\QQ$. 
%\begin{equation*}
%E_w: y^2+wy=x^3+2x
%\end{equation*}
%defines a family of semistable elliptic curves when $w$ is not even. If we require $w\equiv 1 \bmod 3$, then every curve in the family has a non-monogenic 7-torsion field. %See More Families of Semistable Elliptic Curves in Sage:  We can find that that family $E_w: y^2+wy=x^3+2x$ has $b_4=4$ and $b_6=w^2$. Hence $c_4=3\cdot 2^5$ and $\Delta=-2^9-3^3w^4$. Hence if $w\equiv \bmod 3$, we have a non-monogenic 7-torsion field.

If one is interested in a given one-parameter family, work of Cojocaru, Grant, and Jones \cite{CojocaruGrantJones}, shows that in any one-parameter family with no obstructions almost all elliptic curves are Serre curves.

\section{Further Questions}

Two natural questions that arise after seeing that there are indeed many division fields that are not monogenic over $\QQ$ are:

\begin{question}\label{when?}
Precisely when is a given torsion field $\QQ(E[n])$ monogenic over $\QQ$?
\end{question}

\begin{question}\label{where?}
Given an elliptic curve $E/K$ and a torsion field $K(E[n])$, are there subfields $L\subsetneq K(E[n])$ over which $K(E[n])$ is monogenic?
\end{question}

Both Questions \ref{when?} and \ref{where?} are excellent motivation, but complete answers are likely somewhat far away. 

Because of the Weil pairing, a natural $L$ to consider for Question \ref{where?} is $K(\zeta_n)$. Looking at $\QQ(E[2])$ over $\QQ$ should be a nice place to test this hypothesis. Computationally, $\QQ(E[2])$ is not often monogenic over $\QQ$. %Indeed, Table \eqref{Frobenius11} shows there are infinitely many non-monogenic 2-torsion fields for elliptic curves with $a_5=\pm2$ and $b_5=2$.%For a concrete example consider 
%\[E: y^2 + x y = x^{3} -  x^{2} + 12 x + 80.\]
%The Cremona label of $E$ is 5346c1. Using SageMath, we compute $[\QQ(E[2]):\QQ]=6$ and the rational prime 5 splits completely in $\QQ(E[2])$. Thus $\QQ(E[2])$ cannot be monogenic over $\QQ$.
However, there seem to be special families with monogenic 2-torsion fields. For example, Legendre form, $E_{\lambda}: \ y^2=x(x-1)(x-\lambda),$ requires solving 
\[g_{\lambda}: \ \lambda^{6} - 3 \lambda^{5} + \left(6- \frac{j}{256} \right) \lambda^{4} + \left(\frac{j}{128} -7\right) \lambda^{3}
 +\left(6- \frac{j}{256} \right) \lambda^{2} - 3 \lambda + 1,\]
where $j$ is the $j$-invariant of $E_{\lambda}$. When $j\in \ZZ$ is divisible by 256 and $\frac{j}{64}-27$ is square-free, then $\lambda$ seems to yield a monogenic generator of $\QQ\left(E_{\lambda}[2]\right)$ over $\QQ$.

%\newpage

\section{Tables for $p=3,5,7,$ and 11}\label{Tables}

For $p\geq 3$, we have possibilities for $b_p$ other than $b_p=1$. This means that, over $\FF_p$ with $p\geq 3$, we have elliptic curves with endomorphism rings that are not generated by the Frobenius endomorphism.

\begin{table}[h!]
\centering
\begin{adjustbox}{width=%6.3 
5.3 in,center}
\begin{tabular}{ | m{.5cm} | m{.5cm} | m{2.5cm}| m{9.37cm} | }
\hline
$a_3$ & $b_3$ & $\sigma_3$ & non-monogenic $n$ \\
\hline
0 & 1 & \begin{center} \Scale[.9]{$\begin{bmatrix} 
0 & 1 \\
-3 & 0
\end{bmatrix}$} \end{center} &  
4, 7, 8, 14, 16, 20, 28, 40, 52, 56, 61, 80, 91, 104, 122, 160, 164,
182, 205, 244, 259, 266, 328, 364, 410, 484, 488, 518, 532, 547, 656,
661, 671, 703, 728, 820, 949, 968 \\ [.1ex]
\hline
0 & 2 & \begin{center} \Scale[.9]{$\begin{bmatrix} 
1 & 2 \\
-2 & -1
\end{bmatrix}$} \end{center} & 
{\color{red}2,} 4, 7, 8, 14, 16, 20, 28, 40, 52, 56, 61, 80, 91, 104, 122, 160, 164,
182, 205, 244, 259, 266, 328, 364, 410, 484, 488, 518, 532, 547, 656,
661, 671, 703, 728, 820, 949, 968  \\ [.1ex]
\hline
1 & 1 & \begin{center} \Scale[.9]{$\begin{bmatrix} 
1 & 1 \\
-3 & 0
\end{bmatrix}$} \end{center} & 5{\color{red}, 40}  \\ [.1ex]
\hline
-1  & 1 & \begin{center}\Scale[.9]{$\begin{bmatrix} 
0 & 1 \\
-3 & -1
\end{bmatrix}$} \end{center} & 5{\color{red}, 23, 40}  \\ [.1ex]
\hline
2  & 1 & \begin{center} \Scale[.9]{$\begin{bmatrix} 
1 & 1 \\
-2 & 1
\end{bmatrix}$} \end{center} & {\color{red}4,} 11, {\color{red}22,} 136, 272  \\ [.1ex]
\hline
 $-2$  & 1 & \begin{center} \Scale[.9]{$\begin{bmatrix} 
-1 & 1 \\
-2 & -1 
\end{bmatrix}$} \end{center} & {\color{red}4, 22,} 136, 272 \\ [.1ex]
\hline
 $3$  & 1 & \begin{center} \Scale[.9]{$\begin{bmatrix} 
2 & 1 \\
-1 & 1
\end{bmatrix}$} \end{center} & 7, 14, 28, 52, 56, 91, 104, 182, 259, 266, 364, 518, 532, 703, 728,
949 \\ [.1ex]
\hline
 $-3$  & 1 & \begin{center} \Scale[.9]{$\begin{bmatrix} 
-1 & 1 \\
-1 & -2
\end{bmatrix}$} \end{center} & 7, 14, 28, 52, 56, 91, 104, 182, 259, 266, 364, 518, 532, 703, 728,
949 \\ [.1ex]
\hline
\end{tabular}
\end{adjustbox}
\vspace{.05 in}
\caption{Using the splitting of 3 in $\QQ(E[n])$ to show non-monogeneity for $n<1000$.}
\label{Frobenius3}
\end{table}

\newpage

\begin{table}[h!]
\centering
\begin{adjustbox}{width=6 in,center}
\begin{tabular}{ | m{.5cm} | m{.5cm}| m{2.5cm}| m{8.6cm} | }
\hline
$a_5$ & $b_5$ & $\sigma_5$ & non-monogenic $n$ \\
\hline
0 & 1 & \begin{center} \Scale[.9]{$\begin{bmatrix} 
0 & 1 \\
-5 & 0 
\end{bmatrix}$} \end{center} & 
3, 6, 8, 12, {\color{red}18,} 21, 24, 39, 42, 48, 52, 63, 78, 104, 126, 156, {\color{red}186,}
208, 217, 248, 252, 279, 312, 372, 434, 449, 504, 521, 558, 624, 651,
744, 868, 898, 939  \\ [.1 ex]
\hline
1 & 1 & \begin{center} \Scale[.9]{$\begin{bmatrix} 
1 & 1 \\
-5 & 0 
\end{bmatrix}$} \end{center} & 11, 28, 56  \\ [.1 ex]
\hline
$-1$  & 1 & \begin{center}\Scale[.9]{$\begin{bmatrix} 
0 & 1 \\
-5 & -1 
\end{bmatrix}$} \end{center} & 28, 56  \\ [.1 ex]
\hline
2  & 1 & \begin{center} \Scale[.9]{$\begin{bmatrix} 
1 & 1 \\
-4 & 1
\end{bmatrix}$} \end{center} &   \\ [.1 ex]
\hline
 $-2$  & 1 & \begin{center} \Scale[.9]{$\begin{bmatrix} 
-1 & 1 \\
-4 & -1
\end{bmatrix}$} \end{center} &  \\ [.1 ex]
\hline
 $2$  & 2 & \begin{center} \Scale[.9]{$\begin{bmatrix} 
1 & 2 \\
-2 & 1
\end{bmatrix}$} \end{center} & {\color{red}2,} 4, 8, 48 \\ [.1 ex]
\hline
 $-2$  & 2 & \begin{center} \Scale[.9]{$\begin{bmatrix} 
-1 & 2 \\
-2 & -1
\end{bmatrix}$} \end{center} & {\color{red}2,} 4, 8, 48 \\ [.1 ex]
\hline
 $3$  & 1 & \begin{center} \Scale[.9]{$\begin{bmatrix} 
2 & 1 \\
-3 & 1
\end{bmatrix}$} \end{center} & 3, {\color{red}18,} 24, 36, 72 \\ [.1 ex]
\hline
 $-3$  & 1 & \begin{center} \Scale[.9]{$\begin{bmatrix} 
-1 & 1 \\
-3 & -2
\end{bmatrix}$} \end{center} & 3, {\color{red}18,} 24, 36, 72 \\ [.1 ex]
\hline
 $4$  & 1 & \begin{center} \Scale[.9]{$\begin{bmatrix} 
2 & 1 \\
-1 & 2
\end{bmatrix}$} \end{center} & 8, 48 \\ [.1 ex]
\hline
 $-4$  & 1 & \begin{center} \Scale[.9]{$\begin{bmatrix} 
-2 & 1 \\
-1 & -2 
\end{bmatrix}$} \end{center} & 8, 48 \\ [.1 ex]
\hline
\end{tabular}
\end{adjustbox}
\vspace{.05 in}
\caption{Using the splitting of 5 in $\QQ(E[n])$ to show non-monogeneity for $n<1000$.}
\label{Frobenius5}
\end{table}

\newpage

\begin{table}[h!]
\centering
\begin{adjustbox}{width=4.9 in,center}
\begin{tabular}{ | m{.5cm} | m{.5cm}| m{2.5cm}| m{8.53cm} | }
\hline
$a_7$ & $b_7$ & $\sigma_7$ & non-monogenic $n$ \\ 
\hline
0 & 1 & \begin{center} \Scale[.9]{$\begin{bmatrix} 
0 & 1 \\
-7 & 0
\end{bmatrix}$} \end{center} &  
4, 8, {\color{red}12,} 16, 24, 43, 48, 75, 80, 86, 96, 100, 120, 150, 160, 172, 191,
200, 240, 300, 344, 382, 400, 480, {\color{red}516,} 600, 684, 688, 764, 774, 800,
817, {\color{red}911,} 912\\ [.1 ex]
\hline
0 & 2 & \begin{center} \Scale[.9]{$\begin{bmatrix} 
1 & 2 \\
-4 & -1
\end{bmatrix}$} \end{center} &  
4, 8, {\color{red}12,} 16, 24, 43, 48, 75, 80, 86, 96, 100, 120, 150, 160, 172, 191,
200, 240, 300, 344, 382, 400, 480, {\color{red}516,} 600, 684, 688, 764, 774, 800,
817, {\color{red}911,} 912
\\ [.1 ex]
\hline
1 & 1 & \begin{center} \Scale[.9]{$\begin{bmatrix} 
1 & 1 \\
-7 & 0
\end{bmatrix}$} \end{center} &  \\ [.1 ex]
\hline
$-1$  & 1 & \begin{center}\Scale[.9]{$\begin{bmatrix} 
0 & 1 \\
-7 & -1
\end{bmatrix}$} \end{center} &   \\ [.1 ex]
\hline
1  & 3 & \begin{center} \Scale[.9]{$\begin{bmatrix} 
2 & 3 \\
-3 & -1
\end{bmatrix}$} \end{center} & {\color{red} 3,} 36, 936  \\ [.1 ex]
\hline
 $-1$  & 3 & \begin{center} \Scale[.9]{$\begin{bmatrix} 
1 & 3 \\
-3 & -2
\end{bmatrix}$} \end{center} & 3, 9, 18, 36, 936 \\ [.1 ex]
\hline
 $2$  & 1 & \begin{center} \Scale[.9]{$\begin{bmatrix} 
1 & 1 \\
-6 & 1 
\end{bmatrix}$} \end{center} & {\color{red}10,} 20 \\ [.1 ex]
\hline
 $-2$  & 1 & \begin{center} \Scale[.9]{$\begin{bmatrix} 
-1 & 1 \\
-6 & -1 
\end{bmatrix}$} \end{center} & {\color{red}10,} 20 \\ [.1 ex]
\hline
 $3$  & 1 & \begin{center} \Scale[.9]{$\begin{bmatrix} 
2 & 1 \\
-5 & 1
\end{bmatrix}$} \end{center} & 15 \\ [.1 ex]
\hline
 $-3$  & 1 & \begin{center} \Scale[.9]{$\begin{bmatrix} 
-1 & 1 \\
-5 & -2
\end{bmatrix}$} \end{center} & 15 \\ [.1 ex]
\hline
 $4$  & 1 & \begin{center} \Scale[.9]{$\begin{bmatrix} 
2 & 1 \\
-3 & 2
\end{bmatrix}$} \end{center} & 4, 36, 936 \\ [.1 ex]
\hline
 $-4$  & 1 & \begin{center} \Scale[.9]{$\begin{bmatrix} 
-2 & 1 \\
-3 & -2
\end{bmatrix}$} \end{center} & 4, 9, 36, 936 \\ [.1 ex]
\hline
 $4$  & 2 & \begin{center} \Scale[.9]{$\begin{bmatrix} 
3 & 2 \\
-2 & 1
\end{bmatrix}$} \end{center} & 4, 36, 936 \\ [.1 ex]
\hline
 $-4$  & 2 & \begin{center} \Scale[.9]{$\begin{bmatrix} 
-1 & 2 \\
-2 & -3
\end{bmatrix}$} \end{center} & 4, 9, 18, 36, 936 \\ [.1 ex]
\hline
 $5$  & 1 & \begin{center} \Scale[.9]{$\begin{bmatrix} 
3 & 1 \\
-1 & 2 
\end{bmatrix}$} \end{center} & 9, 18, 36, 936 \\ [.1 ex]
\hline
 $-5$  & 1 & \begin{center} \Scale[.9]{$\begin{bmatrix} 
-2 & 1 \\
-1 & -3 
\end{bmatrix}$} \end{center} & 36, 936 \\ [.1 ex]
\hline
\end{tabular}
\end{adjustbox}
\vspace{.05 in}
\caption{Using the splitting of 7 in $\QQ(E[n])$ to show non-monogeneity for $n<1000$}
\label{Frobenius7}
\end{table}

\newpage

\begin{table}[h!]
\centering
\begin{adjustbox}{width=4.1 in,center}
\begin{tabular}{ | m{.5cm} | m{.5cm}| m{2.5cm}| m{6.5cm} | }
\hline
$a_{11}$ & $b_{11}$ & $\sigma_{11}$ & non-monogenic $n$ \\
\hline
0 & 1 & \begin{center} \Scale[.9]{$\begin{bmatrix} 
0 & 1\\ 
-11 & 0 
\end{bmatrix}$} \end{center} &   
6, 12, {\color{red}15,} 20, 24, 30, {\color{red}37,} 40, 60, 74, 111, 120, 148, 183, 222, 240,
244, 305, 333, 366, 444, 488, 610, 666, 732, 915, 976
\\ [.1 ex]
\hline
0 & 2 & \begin{center} \Scale[.9]{$\begin{bmatrix} 
1 & 2\\ 
-6 & -1 
\end{bmatrix}$} \end{center} &   
6, 12, {\color{red}15,} 20, 24, 30, {\color{red}37,} 40, 60, 74, 111, 120, 148, 183, 222, 240,
244, 305, 333, 366, 444, 488, 610, 666, 732, 915, 976\\ [.1 ex]
\hline
1 & 1 & \begin{center} \Scale[.9]{$\begin{bmatrix} 
1 & 1\\ 
-11 & 0 
\end{bmatrix}$} \end{center} &   \\ [.1 ex]
\hline
$-1$  & 1 & \begin{center}\Scale[.9]{$\begin{bmatrix} 
0 & 1 \\
-11 & -1 
\end{bmatrix}$} \end{center} & $10$  \\ [.1 ex]
\hline
2  & 1 & \begin{center} \Scale[.9]{$\begin{bmatrix} 
1 & 1 \\
-1 & 1 
\end{bmatrix}$} \end{center} &   \\ [.1 ex]
\hline
 $-2$  & 1 & \begin{center} \Scale[.9]{$\begin{bmatrix} 
-1 & 1 \\
-10 & -1 
\end{bmatrix}$} \end{center} & {\color{red}7} \\ [.1 ex]
\hline
 $3$  & 1 & \begin{center} \Scale[.9]{$\begin{bmatrix} 
2 & 1 \\
-9 & 1 
\end{bmatrix}$} \end{center} &  \\ [.1 ex]
\hline
 $-3$  & 1 & \begin{center} \Scale[.9]{$\begin{bmatrix} 
-1 & 1 \\
-9 & -2 
\end{bmatrix}$} \end{center} &  \\ [.1 ex]
\hline
 $4$  & 1 & \begin{center} \Scale[.9]{$\begin{bmatrix} 
2 & 1 \\
-7 & 2 
\end{bmatrix}$} \end{center} &  \\ [.1 ex]
\hline
 $-4$  & 1 & \begin{center} \Scale[.9]{$\begin{bmatrix} 
-2 & 1 \\
-7 & -2 
\end{bmatrix}$} \end{center} &  \\ [.1 ex]
\hline
 $4$  & 2 & \begin{center} \Scale[.9]{$\begin{bmatrix} 
3 & 2 \\
-4 & 1 
\end{bmatrix}$} \end{center} & 8, 10{\color{red}, 16} \\ [.1 ex]
\hline
 $-4$  & 2 & \begin{center} \Scale[.9]{$\begin{bmatrix} 
-1 & 2 \\
-4 & -3 
\end{bmatrix}$} \end{center} & 8{\color{red}, 16, 86} \\ [.1 ex]
\hline
 $5$  & 1 & \begin{center} \Scale[.9]{$\begin{bmatrix} 
3 & 1 \\
-5 & 2 
\end{bmatrix}$} \end{center} & {\color{red}7,} 14 \\ [.1 ex]
\hline
 $-5$  & 1 & \begin{center} \Scale[.9]{$\begin{bmatrix} 
-2 & 1 \\
-5 & -3 
\end{bmatrix}$} \end{center} &  \\ [.1 ex]
\hline
 $6$  & 1 & \begin{center} \Scale[.9]{$\begin{bmatrix} 
3 & 1 \\
-2 & 3 
\end{bmatrix}$} \end{center} & 6, {\color{red}45,} 90 \\ [.1 ex]
\hline
 $-6$  & 1 & \begin{center} \Scale[.9]{$\begin{bmatrix} 
-3 & 1 \\
-2 & -3 
\end{bmatrix}$} \end{center} & 6, {\color{red}45,} 90 \\ [.1 ex]
\hline
\end{tabular}
\end{adjustbox}
\vspace{.05 in}
\caption{Using the splitting of 11 in $\QQ(E[n])$ to show non-monogeneity for $n<1000$.}
\label{Frobenius11}
\end{table}

\newpage

\bibliography{Bibliography}
\bibliographystyle{abbrvnat}

\end{document}